\pdfoutput=1
\documentclass{IEEEtran}

\usepackage{textcomp}

\usepackage[T1]{fontenc}
\usepackage[utf8]{inputenc}
\usepackage{microtype}
\usepackage{amsmath}
\usepackage{amsthm}
\usepackage{amssymb}
\usepackage{mathtools}
\usepackage{thmtools}
\usepackage{commath}
\usepackage{bbm} 
\usepackage{verbatim} 
\usepackage{cite} 
\usepackage[capitalise]{cleveref}
\usepackage[font=small,labelfont=bf]{caption}
\usepackage[dvipsnames]{xcolor} 
\usepackage{enumitem}




\DeclareUnicodeCharacter{00A0}{ } 
\DeclarePairedDelimiterX\Set[2]{\lbrace}{\rbrace}%
 { #1 \,:\, #2 } 
\DeclarePairedDelimiterX\inprod[2]{\langle}{\rangle}%
 { #1 , #2 } 
\DeclarePairedDelimiterX\kld[2]{D_{\text{KL}}(}{)}%
 { #1 \: \| \: #2 } 


\newcommand{\maxeig}{\lambda_{\text{max}}}





\newcommand{\grad}{\nabla} 


\declaretheorem[Refname={Theorem,Theorems}]{theorem}
\declaretheorem[style=definition,numberlike=theorem,Refname={Definition,Definitions}]{definition}
\declaretheorem[numberlike=theorem,Refname={Lemma,Lemmas}]{lemma}

\declaretheorem[name=Proposition,numberlike=theorem,Refname={Proposition,Propositions}]{proposition}




\newcommand{\expec}{\mathbb{E}} 
\newcommand{\cov}{\operatorname{Cov}} 

\newcommand{\diag}{\operatorname{diag}} 
\newcommand{\gauss}{\mathcal{N}} 
\newcommand{\sigmap}[1][]{%
	\ifthenelse{\isempty{#1}}{\vctg{\mathcal{X}}}{\skew{4}\widehat{\vctg{\mathcal{X}}}}%
} 





\renewcommand{\epsilon}{\varepsilon}
\renewcommand{\phi}{\varphi}


\newcommand{\R}{\mathbb{R}} 


\newcommand{\vct}[1]{\mathbf{#1}} 
\newcommand{\vctg}{\pmb} 
\newcommand{\transpose}{\intercal} 
\newcommand{\trace}{\operatorname{tr}} 


\newcommand{\mA}{\vct{A}}
\newcommand{\mB}{\vct{B}}
\newcommand{\mC}{\vct{C}}

\newcommand{\mH}{\vct{H}}

\newcommand{\mW}{\vct{W}}

\newcommand{\mP}{\vct{P}}
\newcommand{\mId}{\vct{I}}
\newcommand{\mL}{\vct{L}}

\newcommand{\mJ}{\vct{J}}
\newcommand{\mV}{\vct{V}}
\newcommand{\mX}{\vct{X}}
\newcommand{\mY}{\vct{Y}}
\newcommand{\mU}{\vct{U}}
\newcommand{\mZ}{\vct{Z}}
\newcommand{\mZero}{\vct{0}}


\newcommand{\mb}{\vct{b}}
\newcommand{\mx}{\vct{x}}
\newcommand{\my}{\vct{y}}

\makeatletter
	\@ifundefined{md}{%
		}
		{
			
		}
\makeatother

\newcommand{\mm}{\vct{m}}
\newcommand{\mq}{\vct{q}}

\newcommand{\mf}{\vct{f}}
\newcommand{\mg}{\vct{g}}

\newcommand{\mh}{\vct{h}}


\newcommand{\mSigma}{\vctg\Sigma}

\newcommand{\muug}{\vctg\mu}


\allowdisplaybreaks


\usepackage{tikz}
\usepackage{pgfplots}
\usetikzlibrary{positioning}
\pgfplotsset{compat=newest}
\pgfplotsset{plot coordinates/math parser=false}
\newlength{\figwidth}
\setlength{\figwidth}{0.8\columnwidth}
\newlength{\figheight}
\setlength{\figheight}{0.75\figwidth}


\newcommand{\eqspace}{\hspace{0.5cm}}


\DeclareMathOperator{\lip}{Lip}





\newcommand{\was}{d_{\mathrm{W}}}
\DeclareMathOperator{\pr}{pr}

\newcommand{\etal}{\mbox{\emph{et al.\@}}}



\begin{document}
\title{Wasserstein bounds for non-linear Gaussian filters}
\author{Toni Karvonen and Simo Särkkä
  \thanks{T.\ Karvonen was supported by the Research Council of Finland grants 359183 and 368086.
  He also acknowledges the research environment provided by ELLIS Institute Finland.}  
\thanks{T.\ Karvonen is with the School of Engineering Sciences, Lappeenranta--Lahti University of Technology LUT, Lappeenranta, Finland (email: toni.karvonen@lut.fi).}
\thanks{S.\ Särkkä is with the Department of Electrical Engineering and Automation, Aalto University, Espoo, Finland(email: simo.sarkka@aalto.fi).}
}

\maketitle

\begin{abstract}
  Most Kalman filters for non-linear systems, such as the unscented Kalman filter, are based on Gaussian approximations.
  We use Poincaré inequalities to bound the Wasserstein distance between the true joint distribution of the prediction and measurement and its Gaussian approximation. 
  The bounds can be used to assess the performance of non-linear Gaussian filters and determine those filtering approximations that are most likely to induce error.
\end{abstract}

\begin{IEEEkeywords}
  Kalman filtering, Gaussian filtering, non-linear filtering, Wasserstein distance, Poincaré inequalities
\end{IEEEkeywords}

\section{Introduction}

\IEEEPARstart{T}{he} Kalman filter and its non-linear extensions have been ubiquitous tools for estimating the latent state of partially observed state-space Markov models since the 1960s. Their applications include, to name but a few relatively recent ones, various types of tracking such as vision-aided~\cite{LiKimMourikis2013} and pedestrian tracking~\cite{Foxlin2005}, elimination of noise from brain imaging data~\cite{SarkkaSolinNummenmaaVehtariAuranenVanniLin2012}, and simultaneous localization and mapping~\cite{DissanayakeNewmanClarkDurrantWhyteCsorba2002}.

The Kalman filter is the optimal linear estimator in the mean-square sense when the system is linear Gaussian~\cite{AndersonMoore1979}.
Moreover, under natural observability and controllability assumptions the linear Kalman filter exhibits strong exponential stability properties; see~\cite{Jazwinski1970, AndersonMoore1981}, and~\cite[App.\@~C]{KamenSu1999}.
However, useful theoretical results for popular extensions of the Kalman filter for non-linear Gaussian systems, such as the extended and unscented Kalman filters, have proved far more elusive.
While Lipschitzianity assumptions suffice for understanding the propagation of the filtering distribution through the system dynamics, the update step in which the distribution is conditioned on new measurements is much more difficult to analyse---even over one time-step.
Accordingly, most error analysis for non-linear Kalman filters requires essentially unverifiable assumptions on error covariances and other filtering quantities (e.g., \cite{ReifGuntherYazUnbehauen1999, XiongZhangChan2006, He2022}) or assumes that the measurement model is linear~\cite{Karvonen2020}.

Many non-linear Kalman filters are based on the idea of Gaussian moment-matching.
These \emph{Gaussian filters} replace all distributions that occur in filtering with moment-matched Gaussians that are far easier to operate with; we defer a detailed description to \Cref{sec:KF}.
To moment-match one has to compute integrals that are rarely available in closed form.
This has given rise to a class of non-linear filters that combine Gaussian moment-matching with numerical integration (see~\cite{WuHuWuHu2006} and~\cite[Ch.\@~8]{Sarkka2013}).
The unscented Kalman filter is undoubtedly the most popular member of this class of filters.

\subsection{Prior work and contributions}

In this article we study the accuracy of Gaussian filters over one time-step.
The only prior work on assessing the effect of moment-matching we are aware of is by Morelande and García-Fernández~\cite{MorelandeGarciaFernandez2013}.
By assuming a prediction distribution $\mX \sim \gauss(\mm,\mP)$ (i.e., $\mX$ is a Gaussian random vector with mean $\mm$ and covariance $\mP$) and a non-linear measurement model $\mY = \mh(\mX) + \mV$, where $\mV \sim \gauss(\mZero,\mSigma_\mV)$, they compute that the Kullback--Leibler divergence between the moment-matched Gaussian approximation $\widetilde{\mZ} = (\mX,\widetilde{\mY})$ and the true joint distribution $\mZ = (\mX,\mY)$ is\footnote{This is Eq.\ (23) of~\cite{MorelandeGarciaFernandez2013}, which erroneously omits the coefficient $1/2$.}
\begin{equation}\label{eq:KLbound}
\begin{split}
\kld{\mZ}{\widetilde{\mZ}} ={}& \frac{1}{2}\log\big[\det\big(\mId + \mSigma_\mV^{-1}\big[\cov[\mY] - \mSigma_\mV\big.\big.\Big.\\
&\quad- \Big.\big.\big. \cov(\mX,\mY)^\transpose\mP^{-1}\cov(\mX,\mY)\big]\big)\big].
\end{split}
\end{equation}
However, since the Kullback--Leibler divergence is not a metric, it is difficult to incorporate other errors, such as those propagated from previous time-steps or due to numerical integration in an unscented Kalman filter.

The aim of this article is to obtain upper bounds like~\eqref{eq:KLbound} but in terms of the Wasserstein distance (see \Cref{sec:wasserstein}) rather than the Kullback--Leibler divergence.
To bound the Wasserstein distance between a non-linearly transformed Gaussian distribution and its moment-matched approximation we use recent second-order Poincaré inequalities, to be reviewed in \Cref{sec:poincare}.
In \Cref{sec:WasBounds}, we derive an upper bound, given in \Cref{thm:mainTheorem}, for the Wasserstein distance between the true joint distribution of the prediction and measurement and that produced by the exact Gaussian Kalman filter.
Unlike in~\cite{MorelandeGarciaFernandez2013}, our bound is \emph{over both the prediction and update step}.
The bound is given in terms of expectations of the first and second derivatives of the dynamic and measurement model functions as well as the difference of real and approximated joint means and covariances.
The bound vanishes when the model is affine, in which case the prediction and measurement distributions are Gaussian.
\Cref{sec:numerical-example} contains a numerical example.
While the bound is very conservative, our results will help in gauging the worst-case performance of Gaussian filters in safety-critical applications and in determining which approximations in non-linear filtering are most likely to induce large errors.
We also hope that the results will bring about a new type of stability analysis for Gaussian filters.

\subsection{Notational conventions}

Gradient of a function $g \colon \R^d \to \R$ is a column-vector function $\grad g$ with elements $[\grad g]_i = \partial g/ \partial x_i$. For clarity, the gradient of a function $g$ evaluated at $\mf(\mx)$ is occasionally written $(\grad g)[\mf(\mx)]$.
The Hessian has elements $[\mH_g]_{ij} = \partial g / \partial x_i \partial x_j$. For a function $\mg\colon\R^d\to\R^d$ with components $g_i$, the Jacobian has elements $[\mJ_{\mg}]_{ij} = \partial g_i / \partial x_j$.

We use the Löwner partial ordering of positive-semidefinite matrices. For matrices $\mA$ and $\mB$, $\mA > \mB$ ($\mA \geq \mB$) means that $\mA - \mB$ is positive-definite (positive-semidefinite).
See~\cite[Ch.\@~8]{Bernstein2009} for a review of properties of the Löwner ordering.

Norm is denoted $\norm[0]{\cdot}$. Depending on the object, this stands for the Euclidean norm $\norm[0]{\mx}$ of a vector $\mx \in \R^d$ or the spectral norm of a matrix $\mA \in \R^{d\times d}$, which is defined as
\begin{equation*}
\norm[0]{\mA} = \sup\Set{\norm[0]{\mA\mx} / \norm[0]{\mx}}{\mZero \neq \mx \in \R^d} = \sqrt{\smash[b]{\maxeig(\mA^\transpose\!\mA)}},
\end{equation*}
where $\maxeig$ denotes the largest eigenvalue.
The $L^p$-norm $\norm[0]{\mX}_p$ of a random variable $\mX$ is \smash{$\norm[0]{\mX}_p = (\expec[\norm[0]{\mX}^p])^{1/p}$}.
If $\mA$ is positive-definite, $\sqrt{\mA}$ stands for the unique positive-definite matrix such that $\sqrt{\mA} \sqrt{\mA} = \mA$.

\section{Non-linear Gaussian filtering} \label{sec:KF}

In this article we consider a non-linear dynamic system
\begin{align}
\mX &= \mf(\mX_p) + \mU, \label{eq:X} \\ 
\mY &= \mh(\mX) + \mV , \label{eq:Y}
\end{align}
where the \emph{state} $\mX \in \R^n$ is obtained by propagating the $n$-dimensional Gaussian prior $\mX_p \sim \gauss(\mm_p, \mP_p)$ through the \emph{dynamic model function} $\mf\colon\R^n\to\R^n$.
The \emph{measurement} \sloppy{${\my \in \R^m}$} is a realization of the random variable $\mY$ that is obtained via the \emph{measurement model function} $\mh\colon\R^n\to\R^m$.
The state and measurement are corrupted by additive Gaussian noises $\mU \sim \gauss(\mZero,\mSigma_\mU)$ and $\mV \sim \gauss(\mZero,\mSigma_\mV)$.
The covariance matrices $\mP_p$, $\mSigma_\mU$, and $\mSigma_\mV$ are assumed positive-definite and the random variables $\mX_p$, $\mU$ and $\mV$ independent. The distribution of $\mX$ is called the \emph{prediction distribution}. Unfortunately, the \emph{filtering distribution} $\mX \mid \mY = \my$ is Gaussian only if the dynamic and measurement model functions are affine.

We use $\pr$ to denote Gaussian projection that transforms a given random variable into a Gaussian one while preserving the mean and covariance.
That is, for an arbitrary random variable $\mZ$ with finite mean $\expec(\mZ)$ and covariance $\cov(\mZ)$ we have
\begin{equation} \label{eq:gauss-pr}
  \pr \mZ \sim \gauss( \expec(\mZ), \cov(\mZ) ) ,
\end{equation}
which is to say that the moments of the Gaussian projection match those of the original random variable.
In Gaussian filtering~\cite[Ch.\@~8]{Sarkka2013}, a Gaussian approximation to the filtering distribution is obtained via two projections:
\begin{enumerate}
\item[1.] \emph{Prediction.} Approximate $\mX$ in~\eqref{eq:X} with the moment-matched Gaussian $\widetilde{\mX} = \pr \mX$. That is,
\begin{equation*}
p(\mX) \approx p(\widetilde{\mX}) = \gauss(\mm_\mX,\mP_\mX) ,
\end{equation*}
where $\mm_\mX = \expec(\mX) = \expec[\mf(\mX_p)]$ and
\begin{equation*}
\mP_\mX = \cov(\mX) = \cov[\mf(\mX_p)] + \mSigma_\mU.
\end{equation*}
\item[2.] Use $\widetilde{\mX}$ to construct a Gaussian approximation
  \begin{equation*}
    \widetilde{\mZ} = (\widetilde{\mX}, \widetilde{\mY}) = \pr (\widetilde{\mX}, \mh(\widetilde{\mX}) + \mV)
  \end{equation*}
  of the joint distribution $\mZ$ of $\mX$ and $\mY$.
  The Gaussian approximation to $\mY$ is then \smash{$\widetilde{\mY} \sim \gauss(\mm_{\widetilde{\mY}},\mP_{\widetilde{\mY}})$}, where \smash{$\mm_{\widetilde{\mY}} = \expec(\widetilde{\mY}) = \expec[\mh(\widetilde{\mX})]$} and
\begin{equation*}
\mP_{\widetilde{\mY}} = \cov(\widetilde{\mY}) = \cov[\mh(\widetilde{\mX})] + \mSigma_\mV.
\end{equation*}
Note that $\widetilde{\mY}$ does \emph{not} equal $\pr \mY$:
\begin{equation*}
  \widetilde{\mY} = \pr ( \mh(\pr \mX) + \mV) \neq \pr(\mh(\mX) + \mV) = \pr \mY.
\end{equation*}
The cross-covariance is
\begin{equation*}
  \mC_{\widetilde{\mX},\widetilde{\mY}} = \cov(\widetilde{\mX},\widetilde{\mY}) = \expec\big[ (\widetilde{\mX} - \mm_\mX)(\mh(\widetilde{\mX}) - \mm_{\widetilde{\mY}})^\transpose \big]. 
\end{equation*}
\item[3.] \emph{Update.} Use the standard Gaussian conditioning formulas to compute a Gaussian approximation to $\mX \mid \mY = \my$:
\begin{equation*} 
\begin{split} 
p(\mX \mid \mY = \my) &\approx p(\widetilde{\mX} \mid \widetilde{\mY} = \my) = \gauss\big(\mm_{\widetilde{\mX} \mid \widetilde{\mY}}, \mP_{\widetilde{\mX} \mid \widetilde{\mY}}\big) ,
\end{split}
\end{equation*}
where
\begin{align*}
\mm_{\widetilde{\mX} \mid \widetilde{\mY}} &= \mm_\mX + \mC_{\widetilde{\mX},\widetilde{\mY}}\mP_{\widetilde{\mY}}^{-1}\big(\mY -  \mm_{\widetilde{\mY}}\big),\\
\mP_{\widetilde{\mX} \mid \widetilde{\mY}} &= \mP_\mX - \mC_{\widetilde{\mX},\widetilde{\mY}} \mP_{\widetilde{\mY}}^{-1} \mC_{\widetilde{\mX},\widetilde{\mY}}^\transpose.
\end{align*}
\end{enumerate}

\Cref{fig:kf-structure} shows the one-step structure of a Gaussian filter.
The Gaussian approximation to the conditional distribution is then treated as $\mX_p$ and the procedure repeated for as many time-steps as called for.
Within this scheme the Gaussian integrals
\begin{equation*}
\int_{\R^n}\mg_1(\mx) \gauss(\mx \mid \muug,\mSigma), \eqspace \int_{\R^n}\mg_1(\mx)\mg_2(\mx)^\transpose \gauss(\mx \mid \muug,\mSigma),
\end{equation*}
for $\mg_1, \mg_2 \in \{\mf,\mh\}$ need to be computed, a task that is rarely possible analytically. In engineering literature, the most prevalent numerical integration methods for approximating these integrals are the fully symmetric cubature integration formulas of McNamee and Stenger~\cite{McNameeStenger1967}, used in the unscented transform of Julier \etal~\cite{JulierUhlmannDurrant1995}. See~\cite{WuHuWuHu2006} and~\cite[Ch.\@~8]{Sarkka2013} for other popular sigma-point methods. As our aim is not to compare accuracy or effect of different sigma-point schemes, the problem of integral computations is not taken into account in what follows (one interested in this aspect should see, e.g.,\@~\cite{MorelandeGarciaFernandez2013} and~\cite{FarinaRisticBenvenuti2002}).
That is, we consider an \emph{exact Gaussian filter} which computes the moments exactly.
\cref{thm:2ndPoincare}, our main tool, would not even lend itself easily to inclusion of numerical integration errors.
The triangle inequality could be used, but this would offer little additional insight.

\section{Wasserstein distance} \label{sec:wasserstein}

This section introduces the main metric of this article, the Wasserstein distance. 
Aside from the metrics discussed here, a great number of other metrics for measuring the distance between random variables exist~\cite{GibbsSu2002}.
We focus on the Wasserstein distance because Theorem~\ref{thm:2ndPoincare}, our main workhorse, is not available in sufficient generality for other distances.
The $p$th \emph{Wasserstein distance} $\text{W}_p$, often known as earth mover's distance, between random variables $\mX$ and $\mZ$ in $\R^d$ is
\begin{equation*}
\text{W}_p(\mX,\mZ)^p = \inf\expec(\norm[0]{\mY - \mU}^p) ,
\end{equation*}
where the infimum is over all joint distributions of random variables $\mY$ and $\mU$ whose marginal distributions coincide with those of $\mX$ and~$\mZ$.
Wasserstein distances are important in the theory of optimal transport~\cite{Villani2008}.
We use only the first Wasserstein distance $\text{W}_1$ that will be from now simply called the Wasserstein distance and denoted $\was$. 

Many useful probability metrics are encompassed by the family of metrics defined as
\begin{equation}\label{eq:probabilityMetric}
d(\mX,\mZ) = \sup_{h \in \mathcal{H}} \abs[1]{\expec[h(\mX)] - \expec[h(\mZ)]} ,
\end{equation}
where $\mathcal{H}$ is some class of functions from $\R^d$ to $\R$.
The two most important members of this class are the Wasserstein distance
\begin{equation*}
  \was(\mX,\mZ) = \text{W}_1(\mX, \mZ) = \sup_{\lip(h) \leq 1} \abs[1]{\expec[h(\mX)] - \expec[h(\mZ)]} ,
\end{equation*}
where $\lip(h) = \sup_{\mx\neq\my} \abs[0]{h(\mx) - h(\my)} / \norm[0]{\mx - \my}$ is the \emph{maximal Lipschitz constant}, and the \emph{total variation distance}
\begin{equation*}
d_{\text{TV}}(\mX,\mZ) = \sup_{\abs[0]{h} \leq 1/2} \abs[1]{\expec[h(\mX)] - \expec[h(\mZ)]} .
\end{equation*}
Note that the total variation distance is at most one and is thus more suitable for convergence analysis than for our purposes.

The following propositions can be found in~\cite{GivensShortt1984}.
When combined, they provide an upper bound on the Wasserstein distance between two Gaussian random variables.

\begin{proposition} \label{thm:WasLpInEq}
  If $1 \leq q \leq p$, then $\mathrm{W}_q(\mX,\mZ) \leq \mathrm{W}_p(\mX,\mZ)$ for any random variables $\mX$ and~$\mZ$.
\end{proposition}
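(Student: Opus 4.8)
The plan is to exploit the two defining features of the Wasserstein distance $\mathrm{W}_r$: that it is realised as an \emph{infimum over couplings}, and that the integrand is an $L^r$ norm of a single nonnegative random variable, so the classical monotonicity of $L^r$ norms on a probability space applies. Concretely, I would first recall the coupling representation
\begin{equation*}
\mathrm{W}_r(\mX,\mZ) = \inf_{\pi} \expec_\pi\!\big(\norm[0]{\mY - \mU}^r\big)^{1/r},
\end{equation*}
where the infimum runs over all joint distributions $\pi$ of $(\mY,\mU)$ whose marginals agree with those of $\mX$ and $\mZ$, and $\expec_\pi$ denotes expectation under $\pi$.

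Next I would fix an arbitrary coupling $\pi$ and work on the probability space it defines. Setting $W = \norm[0]{\mY - \mU}$, the aim reduces to the pointwise-in-$\pi$ claim $\expec_\pi(W^q)^{1/q} \leq \expec_\pi(W^p)^{1/p}$, i.e.\ Lyapunov's inequality for the nonnegative random variable $W$. I would obtain it from Jensen's inequality applied to the convex map $t \mapsto t^{p/q}$, which is convex precisely because $p/q \geq 1$ by the hypothesis $1 \leq q \leq p$. Applying Jensen to the nonnegative variable $W^q$ gives $\expec_\pi(W^q)^{p/q} \leq \expec_\pi\big((W^q)^{p/q}\big) = \expec_\pi(W^p)$, and raising both sides to the power $1/p$ yields exactly the desired $L^q \leq L^p$ comparison for this coupling.

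The final step is to pass from the per-coupling inequality to the infima. Since $\expec_\pi(W^q)^{1/q} \leq \expec_\pi(W^p)^{1/p}$ holds for \emph{every} admissible $\pi$, taking the infimum over $\pi$ on the left and bounding it coupling-by-coupling by the right-hand side gives $\mathrm{W}_q(\mX,\mZ) \leq \expec_\pi(W^p)^{1/p}$ for each $\pi$, and then taking the infimum over $\pi$ on the right yields $\mathrm{W}_q(\mX,\mZ) \leq \mathrm{W}_p(\mX,\mZ)$. I expect no genuine obstacle here: the only points needing a word of care are that Jensen applies because $W$ is nonnegative and $(\pi)$ is a probability measure, and that the elementary monotonicity ``$f(\pi) \leq g(\pi)$ for all $\pi$ implies $\inf_\pi f \leq \inf_\pi g$'' does the bookkeeping for the infima. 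The argument makes no use of the specific structure of $\mX$ and $\mZ$, so it holds for arbitrary random variables as stated.
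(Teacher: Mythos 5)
Your proof is correct: the coupling representation, the per-coupling Lyapunov inequality obtained from Jensen applied to $t \mapsto t^{p/q}$, and the passage to infima are all sound, and the argument indeed uses nothing about $\mX$ and $\mZ$ beyond the existence of couplings. The paper itself gives no proof of this proposition---it simply cites Givens and Shortt---so your self-contained argument, which is the standard one, supplies what the paper omits; the only point you might add for completeness is that the inequality holds trivially in the extended reals when $\expec_\pi(W^q) = \infty$, since then $\expec_\pi(W^p) = \infty$ as well.
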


\begin{proposition} \label{thm:W2Gaussians}
  Let $\mX \sim \gauss(\muug_\mX,\mSigma_\mX)$ and \sloppy{${\mZ \sim \gauss(\muug_\mZ,\mSigma_\mZ)}$} with positive-definite $\mSigma_\mX$ and $\mSigma_\mZ$.
  Set $\mL_\mX = \mSigma_\mX^{1/2}$.
  Then
\begin{equation*} 
\begin{split}
\mathrm{W}_2(\mX,\mZ)^2 ={}& \norm[0]{\muug_\mX - \muug_\mZ}^2 + \trace\mSigma_\mX + \trace\mSigma_\mZ \\
&- 2\trace\big(\sqrt{\smash[b]{\mL_\mX\mSigma_\mZ\mL_\mX}} \, \big).
\end{split}
\end{equation*}
\end{proposition}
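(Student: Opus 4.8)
The plan is to work directly from the coupling definition of the Wasserstein distance, specialized to the quadratic cost. Write $\mathrm{W}_2(\mX,\mZ)^2 = \inf\expec(\norm[0]{\mY - \mU}^2)$, where the infimum runs over all joint laws of $(\mY,\mU)$ with marginals $\gauss(\muug_\mX,\mSigma_\mX)$ and $\gauss(\muug_\mZ,\mSigma_\mZ)$. First I would expand the integrand as
\[
\expec(\norm[0]{\mY - \mU}^2) = \expec(\norm[0]{\mY}^2) - 2\expec(\mY^\transpose\mU) + \expec(\norm[0]{\mU}^2),
\]
and observe that the outer two terms depend only on the fixed marginals, equalling $\trace\mSigma_\mX + \norm[0]{\muug_\mX}^2$ and $\trace\mSigma_\mZ + \norm[0]{\muug_\mZ}^2$. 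Writing the cross term as $\expec(\mY^\transpose\mU) = \trace\cov(\mY,\mU) + \muug_\mX^\transpose\muug_\mZ$ and gathering the mean contributions into $\norm[0]{\muug_\mX - \muug_\mZ}^2$, the whole problem collapses to
\[
\mathrm{W}_2(\mX,\mZ)^2 = \norm[0]{\muug_\mX - \muug_\mZ}^2 + \trace\mSigma_\mX + \trace\mSigma_\mZ - 2\sup_{\mC}\trace\mC,
\]
where $\mC = \cov(\mY,\mU)$ ranges over the cross-covariances attainable by some coupling.

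Next I would pin down which $\mC$ are admissible. Any coupling forces the joint covariance $\left[\begin{smallmatrix}\mSigma_\mX & \mC \\ \mC^\transpose & \mSigma_\mZ\end{smallmatrix}\right]$ to be positive-semidefinite, and conversely every positive-semidefinite block matrix with these diagonal blocks is realized by a jointly Gaussian coupling; hence restricting to Gaussian couplings leaves the infimum unchanged, and the supremum above is over exactly this positive-semidefinite constraint. Since $\mSigma_\mX$ is positive-definite, the Schur-complement criterion lets me substitute $\mC = \mL_\mX\mK\mSigma_\mZ^{1/2}$, under which the constraint reduces cleanly to the spectral-norm bound $\norm[0]{\mK} \leq 1$ (using $\mL_\mX\mSigma_\mX^{-1}\mL_\mX = \mId$ and invertibility of $\mSigma_\mZ^{1/2}$). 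The objective becomes
\[
\sup_{\mC}\trace\mC = \sup_{\norm[0]{\mK}\leq 1}\trace\big(\mSigma_\mZ^{1/2}\mL_\mX\mK\big).
\]

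The main computation is to evaluate this last supremum. I would invoke the standard variational identity that $\sup_{\norm[0]{\mK}\leq 1}\trace(\mA\mK)$ equals the nuclear norm of $\mA = \mSigma_\mZ^{1/2}\mL_\mX$, that is, the sum of its singular values, attained when $\mK$ is the transposed orthogonal factor in the polar decomposition of $\mA$. Because the nuclear norm equals $\trace\sqrt{\smash[b]{\mA^\transpose\mA}}$ and here $\mA^\transpose\mA = \mL_\mX\mSigma_\mZ^{1/2}\mSigma_\mZ^{1/2}\mL_\mX = \mL_\mX\mSigma_\mZ\mL_\mX$, the supremum equals $\trace\sqrt{\smash[b]{\mL_\mX\mSigma_\mZ\mL_\mX}}$. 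Substituting this back into the expression from the first paragraph gives the claimed identity, the infimum being attained by the Gaussian coupling whose cross-covariance is the optimal $\mC$.

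The hard part will be the reduction carried out in the second paragraph: verifying that minimizing over all couplings is equivalent to the finite-dimensional maximization of $\trace\mC$ subject to positive-semidefiniteness of the joint covariance, and checking that the substitution $\mC = \mL_\mX\mK\mSigma_\mZ^{1/2}$ turns that block constraint into precisely $\norm[0]{\mK}\leq 1$. Once this is in place, the nuclear-norm evaluation is a classical fact that I would cite rather than reprove, and the remaining trace manipulations are routine.
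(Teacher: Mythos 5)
Your argument is correct. Note that the paper itself gives no proof of this proposition --- it is quoted as a known result with a pointer to \cite{GivensShortt1984} --- so there is nothing in the text to compare against; your derivation is essentially the classical one from that line of work (Givens--Shortt, Olkin--Pukelsheim, Dowson--Landau): expand the quadratic cost so that only the cross-covariance $\mC=\cov(\mY,\mU)$ is free, observe that the attainable $\mC$ are exactly those making the joint covariance positive-semidefinite (with Gaussian couplings realizing every such $\mC$, so the infimum is unchanged), and evaluate $\sup\trace\mC$ via the Schur complement and the polar decomposition, giving $\trace\sqrt{\smash[b]{\mL_\mX\mSigma_\mZ\mL_\mX}}$. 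All the individual steps --- the substitution $\mC=\mL_\mX\mK\mSigma_\mZ^{1/2}$ reducing the block constraint to $\norm[0]{\mK}\leq 1$, and the nuclear-norm identity $\sup_{\norm[0]{\mK}\leq 1}\trace(\mA\mK)=\trace\sqrt{\smash[b]{\mA^\transpose\mA}}$ --- check out, and the optimizer you exhibit shows the infimum is attained, so the equality (not just an inequality) is justified.
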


In the notation of \cref{thm:W2Gaussians}, these propositions yield
\begin{equation}\label{eq:WasGBound}
\begin{split}
  \was(\mX,\mZ)^2 \leq{}& \norm[0]{\muug_\mX - \muug_\mZ}^2 + \trace\mSigma_\mX + \trace\mSigma_\mZ \\
& - 2\trace\big(\sqrt{\smash[b]{\mL_\mX\mSigma_\mZ\mL_\mX}} \, \big).
\end{split}
\end{equation}
Ley \etal~\cite{LeyReinertSwan2015} have derived an upper bound for the Wasserstein distance between two univariate Gaussians directly, without using \cref{thm:WasLpInEq}. However, in the multivariate case bounds derived in this way seem to exist only for centered Gaussians~\cite[Exr.\@~4.5.3]{NourdinPeccati2012}.

\section{Stein's method and Poincaré inequalities} \label{sec:poincare}

This section introduces the two inequalities essential for the developments of this article. These are the \emph{Poincaré inequalities} of \cref{thm:1stPoincare,thm:2ndPoincare} that provide means of controlling a transformed Gaussian random variable in terms of expectations of first and second derivatives of the transformation. These inequalities, particularly the second one, are closely linked to \emph{Stein's method}, the basics of which we sketch next.

Stein's method has its basis on the familiar identity
\begin{equation} \label{eq:stein-identity}
\expec[X g(X)] = \expec[g'(X)],
\end{equation}
often called \emph{Stein's identity} or \emph{Stein's lemma}, that holds for every differentiable function \sloppy{${g\colon\R\to\R}$} if and only if \sloppy{${X \sim \gauss(0,1)}$}.
This identity allows one to assess the Gaussianity of a transformed random variable via the differential equation
\begin{equation*}
g'(z) - zg(z) = h(z) - \expec[h(X)],
\end{equation*}
where $h\colon\R\to\R$ is differentiable.
For a solution $g_h$ to this differential equation and any random variable $Z$ we have
\begin{equation*}
\expec[h(X)] - \expec[h(Z)] = \expec[g_h'(Z)] - \expec[Zg_h(Z)] ,
\end{equation*}
where the right-hand side vanishes by~\eqref{eq:stein-identity} if $Z \sim \gauss(0, 1)$.
In the univariate case, \Cref{eq:probabilityMetric} can be thus written as
\begin{equation*}
\begin{split}
d_{\mathcal{H}}(X,Z) &= \sup_{h \in \mathcal{H}} \abs[1]{\expec[h(X)] - \expec[h(Z)]} \\
&= \sup_{h \in \mathcal{H}} \abs[1]{\expec[g_h'(Z)] - \expec[Z g_h(Z)]},
\end{split}
\end{equation*}
which depends on $X$ only via $g_h$.
Stein's method has its origins in the work of Stein~\cite{Stein1972}. See~\cite{ChenGoldsteinShao2011, NourdinPeccati2012}, and \cite{LeyReinertSwan2017} for reviews.

First of the inequalities we are interested in is variously known as \emph{Poincaré inequality}~\cite{NourdinPeccati2012} or \emph{Chernoff's inequality}~\cite{Cacoullos1982} and provides variance bounds for transformed Gaussian random variables. This inequality originates in the works of Nash~\cite[Part II]{Nash1958} and Chernoff~\cite{Chernoff1981}, with proofs based on a variance expansion in terms of Hermite polynomials. 
The multivariate version below is due to Cacoullos~\cite{Cacoullos1982}.

\begin{theorem}[Poincaré inequality; \cite{Cacoullos1982}]\label{thm:1stPoincare} 
If $\mg \colon \R^{d_1} \to \R^{d_2}$ is differentiable and $\mX \sim \gauss(\muug,\mSigma)$, then
\begin{equation*}
\expec[\mJ_{\mg}(\mX)]  \mSigma \expec[\mJ_{\mg}(\mX)]^\transpose \leq \cov[\mg(\mX)] \leq \expec\big[\mJ_{\mg}(\mX)\mSigma\mJ_{\mg}(\mX)^\transpose\big].
\end{equation*}
The bounds coincide if and only if $\mg$ is affine, which is to say that $\mg(\mx) = \mA\mx + \mb$ for some $\mA \in \R^{d_2\times d_1}$ and $\mb \in \R^{d_2}$.
\end{theorem}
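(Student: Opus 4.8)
The plan is to derive both inequalities from a single integral representation of $\cov[\mg(\mX)]$ obtained by Gaussian interpolation, and to read off the equality case from the lower bound. Since gradients and covariances are translation invariant, I would first reduce to $\muug = \mZero$ by replacing $\mg$ with $\mx \mapsto \mg(\muug + \mx)$, which leaves $\expec[\mJ_{\mg}(\mX)]$, $\cov[\mg(\mX)]$, and the two bounds unchanged; throughout I assume, as is implicit in the statement, that the relevant expectations are finite. Next I would introduce an independent copy $\mX' \sim \gauss(\mZero,\mSigma)$ of $\mX$ and the interpolated family $\mX_\rho = \rho\mX + \sqrt{1-\rho^2}\,\mX'$ for $\rho \in [0,1]$, so that each $\mX_\rho \sim \gauss(\mZero,\mSigma)$ while $\cov(\mX,\mX_\rho) = \rho\mSigma$. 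Writing $\Phi(\rho) = \expec[\mg(\mX)\mg(\mX_\rho)^\transpose]$, the endpoints give $\Phi(1) = \expec[\mg(\mX)\mg(\mX)^\transpose]$ and $\Phi(0) = \expec[\mg(\mX)]\expec[\mg(\mX)]^\transpose$, whence $\cov[\mg(\mX)] = \int_0^1 \Phi'(\rho)\,d\rho$. Differentiating under the expectation and integrating by parts against the Gaussian density (Price's theorem) yields the central identity
\begin{equation*}
\cov[\mg(\mX)] = \int_0^1 \expec\big[\mJ_{\mg}(\mX)\,\mSigma\,\mJ_{\mg}(\mX_\rho)^\transpose\big]\,d\rho .
\end{equation*}

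For the upper bound I would fix $\mv \in \R^{d_2}$ and set $\muu(\mx) = \mSigma^{1/2}\mJ_{\mg}(\mx)^\transpose\mv$, so that $\mv^\transpose\mJ_{\mg}(\mX)\mSigma\mJ_{\mg}(\mX_\rho)^\transpose\mv = \muu(\mX)^\transpose\muu(\mX_\rho)$. Because $\mX$ and $\mX_\rho$ share the marginal $\gauss(\mZero,\mSigma)$, the Cauchy--Schwarz inequality gives $\expec[\muu(\mX)^\transpose\muu(\mX_\rho)] \leq \expec[\norm[0]{\muu(\mX)}^2]$ for every $\rho$; integrating over $[0,1]$ turns the identity into $\mv^\transpose\cov[\mg(\mX)]\mv \leq \mv^\transpose\expec[\mJ_{\mg}(\mX)\mSigma\mJ_{\mg}(\mX)^\transpose]\mv$, which is the claimed right-hand inequality since $\mv$ was arbitrary.

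For the lower bound (and the equality case) I would instead split off the linear part. With $\mB = \expec[\mJ_{\mg}(\mX)]$, set $\mh(\mx) = \mg(\mx) - \mB\mx$, so that $\expec[\mJ_{\mh}(\mX)] = \mZero$. A first-order Gaussian integration by parts (Stein's lemma) gives $\cov(\mh(\mX),\mX) = \expec[\mJ_{\mh}(\mX)]\mSigma = \mZero$, so the cross terms vanish and $\cov[\mg(\mX)] = \cov[\mh(\mX)] + \mB\mSigma\mB^\transpose \geq \mB\mSigma\mB^\transpose$, which is the left-hand inequality because $\cov[\mh(\mX)]$ is positive semidefinite. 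If the two bounds coincide, then $\cov[\mg(\mX)]$ equals both, forcing $\cov[\mh(\mX)] = \mZero$; as $\mSigma$ is positive-definite the law of $\mX$ has full support, so the continuous $\mh$ must be constant and $\mg$ is affine. Conversely, if $\mg(\mx) = \mA\mx + \mb$ then $\mJ_{\mg} \equiv \mA$ and both bounds reduce to $\mA\mSigma\mA^\transpose = \cov[\mg(\mX)]$.

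The main obstacle is rigour in the interpolation step: differentiating $\Phi$ under the expectation and carrying out the Gaussian integration by parts requires integrability and mild growth control on $\mg$ and $\mJ_{\mg}$ that go slightly beyond bare differentiability, so I would either impose such conditions or justify the identity by a truncation and mollification argument and then pass to the limit. Once that identity is secured, both inequalities and the equality characterization follow from the elementary Cauchy--Schwarz and Stein steps above.
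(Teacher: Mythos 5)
Your argument is essentially correct, but it is worth noting that the paper does not prove this statement at all: it imports it verbatim from Cacoullos \cite{Cacoullos1982}, and the surrounding text points out that the classical proofs (Nash, Chernoff, Cacoullos) rest on an expansion of the variance in Hermite polynomials, from which the lower bound appears as the first term of the expansion and the upper bound follows from the orthogonality of the higher-order terms. Your route is genuinely different and self-contained: the Gaussian interpolation identity $\cov[\mg(\mX)] = \int_0^1 \expec[\mJ_{\mg}(\mX)\mSigma\mJ_{\mg}(\mX_\rho)^\transpose]\,d\rho$ together with Cauchy--Schwarz gives the upper bound, while Stein's lemma applied to $\mh(\mx) = \mg(\mx) - \expec[\mJ_{\mg}(\mX)]\mx$ gives the lower bound and, via $\cov[\mh(\mX)] = \mZero$ plus full support of $\gauss(\muug,\mSigma)$, the equality characterization; all of these steps check out (the interpolation derivative $\Phi'(\rho)_{kl} = \sum_{ij}\Sigma_{ij}\expec[\partial_i g_k(\mX)\,\partial_j g_l(\mX_\rho)]$ is the correct Price/Gaussian-interpolation formula for the pair $(\mX,\mX_\rho)$ with cross-covariance $\rho\mSigma$). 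What each approach buys: the Hermite expansion yields an exact series for the variance and hence sharper intermediate bounds, whereas your interpolation-plus-Stein argument is shorter, extends painlessly to the matrix-valued multivariate setting actually needed here, and makes the equality case transparent. The only caveat --- which you correctly flag --- is that bare differentiability of $\mg$, as stated in the theorem, does not license differentiation under the expectation or even finiteness of $\cov[\mg(\mX)]$; some square-integrability of $\mg(\mX)$ and $\mJ_{\mg}(\mX)$ must be assumed (as it is in \cite{Cacoullos1982}), and your proposed truncation-and-mollification limit is the standard way to close that gap, so I do not regard it as a defect of the proof.
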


Recent \emph{second order Poincaré inequalities} are essential for our purposes. 
These inequalities bound distances between $\mg(\mX)$ and $\pr \mg(\mX)$ when $\mX$ is Gaussian.
The first such inequality, proved by Chatterjee~\cite{Chatterjee2009}, was for the total variation distance but required that $\mg$ be a mapping to the real line.
Multivariate extensions for the Wasserstein distance appear in~\cite{NourdinPeccatiReinert2009} and~\cite{NourdinPeccatiReveillac2010}. The version in \Cref{thm:2ndPoincare} below is a corollary of~\cite[Thm.\@~7.1]{NourdinPeccatiReinert2009}.

\begin{definition}\label{def:class} 
Let $\mX \in \R^{d_1}$ be a random variable. The set of twice-differentiable functions $\mg\colon\R^{d_1}\to\R^{d_2}$ such that $\mg(\mX)$, $\grad g_i(\mX)$ and $\mH_{g_i}(\mX)$ have finite $L^4$-norms for all $i=1,\ldots,d_2$ is denoted by $\mathcal{L}(\mX)$.
\end{definition}

For notational convenience we define
  \begin{align*}
    \lVert \nabla \mg(\mX) \rVert_{4'} &= \sum_{i=1}^{d_2} ( \expec [\lVert \nabla g_i(\mX) \rVert^4] )^{1/4}, \\
    \lVert \mH_{\mg}(\mX) \rVert_{4'} &= \sum_{i=1}^{d_2} ( \expec [\lVert \mH_{g_i}(\mX) \rVert^4] )^{1/4}
  \end{align*}
when $\mg \colon \R^{d_1} \to \R^{d_2}$.
Note that due to the placement of the root these are not conventional $L^4$-norms but sums of $L^4$-norms.
These quantities are finite if $\mg \in \mathcal{L}(\mX)$.

\begin{theorem}[2nd order Poincaré inequality; \cite{NourdinPeccatiReinert2009}]\label{thm:2ndPoincare} Let $\mX \in \R^{d_1}$ be a Gaussian random variable and $\mg \colon \R^{d_1} \to \R^{d_2}$ for $d_1 \geq d_2$ a twice differentiable function. 
Suppose that $\mg\in\mathcal{L}(\mX)$ and that $\mg(\mX)$ has mean $\muug$ and positive-definite covariance $\mSigma$. 
Then
\begin{equation*} \label{ineq:2ndPoincare}
\begin{split}
\was\big(\mg(\mX),\pr \mg(\mX)\big) \leq \frac{3}{\sqrt{2}} & \norm[0]{\mSigma^{-1}} \norm[0]{\mSigma}^{1/2} \\
&\times\norm[0]{\mH_{\mg}(\mX)}_{4'} \norm[0]{\grad \mg(\mX)}_{4'} .
\end{split}
\end{equation*}
\end{theorem}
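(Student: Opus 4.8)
The plan is to obtain \Cref{thm:2ndPoincare} as a specialization of the abstract multivariate second-order Poincaré inequality \cite[Thm.\@~7.1]{NourdinPeccatiReinert2009}, treating the hard analytic content of that result as a black box and spending the effort instead on translating its abstract Malliavin-calculus bound into the stated form and on the bookkeeping that produces the $\norm[0]{\cdot}_{4'}$ sums. In outline: reduce to the centered case, realize $F_i := g_i(\mX)$ inside the isonormal-Gaussian framework, invoke the abstract inequality, and then repackage its right-hand side.

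First I would reduce to $\muug = \mZero$. Since $\was$ is invariant under a common translation, $\was(\mg(\mX),\mZ) = \was(\mg(\mX)-\muug,\, \mZ-\muug)$ with $\mZ-\muug \sim \gauss(\mZero,\mSigma)$, so I may assume $\expec[\mg(\mX)] = \mZero$; the covariance $\mSigma$, and hence $\norm[0]{\mSigma^{-1}}$ and $\norm[0]{\mSigma}$, are untouched. The hypothesis $d_1 \geq d_2$ is the consistency condition making $\mSigma = \cov[\mg(\mX)] \in \R^{d_2\times d_2}$ capable of being positive-definite: if $d_2 > d_1$ the image of $\mg$ spans at most a $d_1$-dimensional affine subspace and $\mSigma$ is singular, contradicting the hypothesis. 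The assumption $\mg \in \mathcal{L}(\mX)$ of \Cref{def:class} is precisely what places each $F_i$ in the domain $\mathbb{D}^{2,4}$ required by the cited theorem, and in the finite-dimensional Gaussian realization the first and second Malliavin derivatives of $F_i$ are the ordinary derivative tensors at $\mX$, i.e.\ $DF_i \leftrightarrow \grad g_i(\mX)$ and $D^2F_i \leftrightarrow \mH_{g_i}(\mX)$, with the Hilbert-space norm of $DF_i$ becoming the Euclidean norm $\norm[0]{\grad g_i(\mX)}$ and the operator norm of $D^2F_i$ becoming the spectral norm $\norm[0]{\mH_{g_i}(\mX)}$.

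With this identification, \cite[Thm.\@~7.1]{NourdinPeccatiReinert2009} supplies a bound of the shape $\was(\mg(\mX),\mZ) \leq \norm[0]{\mSigma^{-1}}\,\norm[0]{\mSigma}^{1/2}\,R$, where the remainder $R$ measures how far the random matrix with entries $\inprod{DF_i}{-DL^{-1}F_j}$ sits from its mean $\mSigma$, with $L$ the Ornstein--Uhlenbeck generator. The second-order part of the inequality controls each fluctuation by the second derivative: one bounds $\var\inprod{DF_i}{-DL^{-1}F_j}$ by a Cauchy--Schwarz/Poincaré estimate whose right-hand side factorizes into an $L^4$ norm of $\mH_{g_i}(\mX)$ times an $L^4$ norm of $\grad g_j(\mX)$, the numerical constants collecting into the prefactor $3/\sqrt{2}$. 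I would then perform the purely combinatorial final step: writing $a_i = (\expec\norm[0]{\mH_{g_i}(\mX)}^4)^{1/2}$ and $b_j = (\expec\norm[0]{\grad g_j(\mX)}^4)^{1/2}$, the remainder is at most $\sqrt{\sum_{i,j} a_i b_j}$, and the subadditivity $\sqrt{\sum_{i,j} a_i b_j} \leq \sum_{i,j}\sqrt{a_i b_j} = \big(\sum_i \sqrt{a_i}\big)\big(\sum_j \sqrt{b_j}\big)$ converts the double sum into the product $\norm[0]{\mH_{\mg}(\mX)}_{4'}\,\norm[0]{\grad\mg(\mX)}_{4'}$, since $\sqrt{a_i} = (\expec\norm[0]{\mH_{g_i}(\mX)}^4)^{1/4}$ and likewise for $b_j$.

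The main obstacle is the second-order estimate itself---bounding $\var\inprod{DF_i}{-DL^{-1}F_j}$ by products of $L^4$ norms of Hessians and gradients---which is the genuine content of the second-order Poincaré inequality and which I would import wholesale rather than reprove, since it rests on Malliavin calculus (Mehler's formula and integration by parts on Wiener space). Within the derivation of the corollary the two delicate points are matching the abstract norms to the concrete ones---keeping the Euclidean norm for $\grad g_i$ and the spectral (operator) norm for $\mH_{g_i}$, so that the cited inequality's operator norm is not inadvertently inflated to a Hilbert--Schmidt norm---and preserving the product structure in the combinatorial step rather than collapsing it into a single aggregate $L^4$ norm. The centering reduction and the finite-dimensional identification of the Malliavin derivatives are routine by comparison.
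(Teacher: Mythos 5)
The paper offers no proof of this statement at all---it is imported as ``a corollary of \cite[Thm.~7.1]{NourdinPeccatiReinert2009}''---so your overall strategy of invoking that theorem as a black box and translating its Malliavin-calculus bound into the stated form is exactly the paper's own route, and your combinatorial repackaging into the $\norm[0]{\cdot}_{4'}$ product is plausible. However, one of your side claims is simply false: the hypothesis $d_1 \geq d_2$ is \emph{not} explained by saying that for $d_2 > d_1$ the image of $\mg$ lies in a $d_1$-dimensional affine subspace and hence $\mSigma$ is singular. For non-linear $\mg$ the image is a submanifold, not an affine subspace, and the covariance can be non-singular even when $d_2 > d_1$: take $\mg(x) = (x, x^2)$ with $X \sim \gauss(0,1)$, whose covariance is $\diag(1,2) > 0$. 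The hypothesis is inherited from the cited framework, not forced by a rank argument.

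More substantively, the step you yourself flag as delicate---matching the abstract Malliavin norms to the concrete Euclidean and spectral ones---is where the real content of the ``corollary'' lives, and your identification $\lVert DF_i \rVert_H = \norm[0]{\grad g_i(\mX)}$ is only valid when $\mX$ is a \emph{standard} Gaussian. For a general $\mX \sim \gauss(\muug_\mX, \mK)$ one must realize $F_i$ as $g_i(\muug_\mX + \mK^{1/2}\,\cdot\,)$ applied to a standard Gaussian, so the gradients and Hessians acquire factors of $\mK^{1/2}$ that are absent from the stated bound; the right-hand side as written is not even invariant under the reparametrization $\mX \mapsto c^{-1}\mX$, $\mg \mapsto \mg(c\,\cdot\,)$, which leaves $\mg(\mX)$ and $\mSigma$ unchanged. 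Since the theorem is stated for an arbitrary Gaussian $\mX$ and is later applied in \Cref{lemma:PoincareLemma} to $\mW$ with covariance $\diag(\mP_p,\mSigma_\mU,\mSigma_\mV) \neq \mId$, this is not cosmetic: a derivation from \cite[Thm.~7.1]{NourdinPeccatiReinert2009} must either standardize $\mX$ first and track the resulting covariance weights in $\norm[0]{\grad g_i}$ and $\norm[0]{\mH_{g_i}}$, or restrict the statement to standard $\mX$. The remaining bookkeeping in your sketch (centering, keeping the operator norm on the Hessian, and the passage from the double sum to the product of $\norm[0]{\cdot}_{4'}$ sums) is fine.
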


Interpreting this inequality is straighforward: if the Hessians are non-zero, the distance is positive since $\mg(\mX)$ is not Gaussian; if the Hessians vanish, $\mg$ is affine, so that $\mg(\mX)$ is Gaussian and $\pr \mg(\mX) = \mg(\mX)$.

\section{Wasserstein bounds for Gaussian filters}\label{sec:WasBounds}

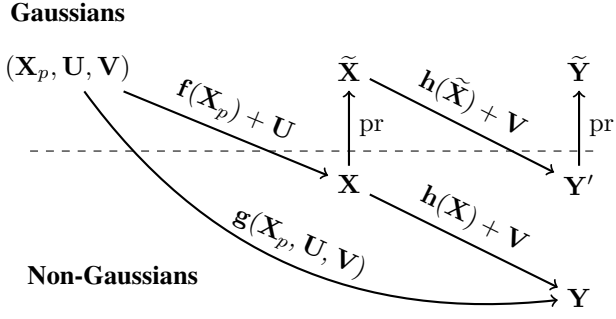
\begin{figure}
\centering
\begin{tikzpicture}
\node (Xp) [] {$(\mX_p,\mU,\mV)$};
\node (Xtilde) [right=of Xp,xshift=1.5cm] {$\widetilde{\mX}$};
\node (X) [below=of Xtilde] {$\mX$};
\node (Ytilde) [right=of Xtilde,xshift=1.5cm] {$\widetilde{\mY}$};
\node (Ydot) [below=of Ytilde] {$\mY'$};
\node (Y) [below=of Ydot] {$\mY$};

\draw[->,thick] (Xp) --(X) node[above,midway,sloped] {$\mf(\mX_p) + \mU$};
\draw[->,thick] (X) --(Xtilde) node[midway,xshift=0.3cm] {$\pr$};
\draw[->,thick] (Xtilde) --(Ydot) node[above,midway,sloped] {$\mh(\widetilde{\mX}) + \mV$};
\draw[->,thick] (Ydot) --(Ytilde) node[midway,xshift=0.3cm] {$\pr$};
\draw[->,thick] (X) --(Y) node[above,midway,sloped] {$\mh(\mX) + \mV$};

\draw[dashed] (-0.5,-1.1) -- (7,-1.1);

\path[->,thick] (Xp) edge[bend right, black!40] node [right,above,midway, yshift=0.15cm, xshift=0.5cm] {$\mg(\mX_p,\mU,\mV)$} (Y);
\path[->,thick] (Xp) edge[bend right = 32, black!40] (X);

\node (gauss) [above=of Xp,yshift=-0.8cm] {\textbf{Gaussians}};
\node (nongauss) [below=of Xp,yshift=-1.2cm,xshift=0.6cm] {\textbf{Non-Gaussians}};

\end{tikzpicture}
\caption{Structure and relations of different random variables and their Gaussian approximations in \cref{sec:KF,sec:WasBounds}. The random variables on the upper part of the figure are Gaussian while the ones on the lower part are not unless $\mf$ and $\mh$ are affine. The Gaussian projection $\pr$ is defined in~\eqref{eq:gauss-pr} and the function $\mg$ in \Cref{eq:geq}.}\label{fig:kf-structure}
\end{figure}

This section contains the main results of the article.
We establish upper bounds on the Wasserstein distance $\was(\mZ,\widetilde{\mZ})$ between the true joint distribution $\mZ$ of $\mX$ and $\mY$ and the Gaussian filter approximation $\widetilde{\mZ} = (\widetilde{\mX},\widetilde{\mY})$ constructed as described in \cref{sec:KF}.
To use \cref{thm:2ndPoincare} we need to write $\mZ$ as a transformation of a Gaussian random variable. This can be done by defining a transformation $\mg\colon\R^{2n + m}\to \R^{n + m}$ as
\begin{equation}\label{eq:geq}
\mg(\mx) = \mg\begin{pmatrix}\mx_1 \\ \mx_2 \\ \mx_3 \end{pmatrix} = \begin{pmatrix} \mf(\mx_1) + \mx_2 \\ \mq(\mx_1,\mx_2) + \mx_3 \end{pmatrix}
\end{equation}
where $\mx_1,\mx_2\in\R^n$, $\mx_3 \in \R^m$ and 
\begin{equation}\label{eq:furtherAuxFunc}
\mq( \mx_1 , \mx_2 ) = \mh[\mf(\mx_1) + \mx_2].
\end{equation}
It then follows that
\begin{equation*}
\mZ = ( \mX  , \mY ) = \mg(\mW) = \mg( \mX_p, \mU, \mV ),
\end{equation*}
where $\mW = (\mX_p, \mU, \mV) \in \R^{2n + m}$ is Gaussian.

As is apparent from the structure of a Gaussian filter depicted in \Cref{fig:kf-structure}, the mean and covariance of $\mZ$ do not match those of $\widetilde{\mZ}$, which means that \cref{thm:2ndPoincare} cannot be applied directly.\footnote{We note that one can construct, in a manner reminiscent of the \emph{augmented} unscented transform~\cite{WuHuWuHu2005}, a filter that is based on a direct Gaussian approximation of $\mZ = \mg(\mX_p, \mU, \mV)$ instead of an approximation derived from $\pr \mX$ (see Figure~\ref{fig:kf-structure}).
This is rarely done in practice.
However, in the augmented case it suffices to bound $\was(\mZ,\pr\mZ)$, which is done in Lemma~\ref{lemma:PoincareLemma}.} The triangle inequality and~\eqref{eq:WasGBound} tackle this problem:
\begin{equation}\label{ineq:triangle}
\was(\mZ,\widetilde{\mZ}) \leq \was(\mZ,\pr\mZ) + \was(\pr\mZ,\widetilde{\mZ}) .
\end{equation}
\Cref{thm:2ndPoincare} is now applicable to the first term and the bound~\eqref{eq:WasGBound} to the second.
We begin by bounding the true joint covariance $\cov(\mZ)$ with the Poincaré variance inequality of \Cref{thm:1stPoincare}.

\begin{lemma}\label{lemma:CovLemma} If $\mf$ and $\mh$ are differentiable, then the joint covariance $\cov(\mZ)$ obeys the Löwner bounds
\begin{equation}\label{eq:1stPoincareCovZ}
\mA_1 + \mB_1 + \mC_\mV \leq \cov(\mZ) \leq \mA_2 + \mC_\mV ,
\end{equation}
where
\begin{align}
\mA_1 ={}& \begin{pmatrix} \expec[\mJ_\mf(\mX_p)] \\ \expec\big[\mJ_\mh[\mf(\mX_p) + \mU]\mJ_\mf(\mX_p)\big] \end{pmatrix} \mP_p \nonumber \\
&\times \begin{pmatrix} \expec[\mJ_\mf(\mX_p)]^\transpose & \expec\big[\mJ_\mh[\mf(\mX_p) + \mU]\mJ_\mf(\mX_p)\big]^\transpose\end{pmatrix}, \nonumber \\
\mB_1 ={}& \begin{pmatrix} \mId_{n\times n} \\ \expec\big(\mJ_\mh[\mf(\mX_p) + \mU]\big) \end{pmatrix} \mSigma_\mU \nonumber \\
&\times \begin{pmatrix} \mId_{n\times n} & \expec\big(\mJ_\mh[\mf(\mX_p) + \mU]\big)^\transpose\end{pmatrix}, \nonumber \\
\mA_2 ={}& \expec\left[\begin{pmatrix} \mId_{n\times n} \\ \mJ_\mh[\mf(\mX_p) + \mU] \end{pmatrix} \big[\mJ_\mf(\mX_p)\mP_p\mJ_\mf(\mX_p)^\transpose + \mSigma_\mU\big]\right. \nonumber \\
&\quad\times \left.\begin{pmatrix} \mId_{n\times n} & \mJ_\mh[\mf(\mX_p) + \mU]^\transpose\end{pmatrix}\right], \nonumber \\
\mC_\mV ={}& \begin{pmatrix} \mZero_{n\times n} & \mZero_{n\times m} \\ \mZero_{m \times n} & \mSigma_\mV \end{pmatrix}. \label{eq:mC-def}
\end{align}
\end{lemma}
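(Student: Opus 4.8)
The plan is to realise $\mZ = \mg(\mW)$ as the image under $\mg$ of the Gaussian vector $\mW = (\mX_p,\mU,\mV)$ and then invoke the Poincaré variance inequality of \cref{thm:1stPoincare}. Because $\mX_p$, $\mU$ and $\mV$ are independent, $\mW$ is Gaussian with the \emph{block-diagonal} covariance $\mSigma_\mW = \diag(\mP_p,\mSigma_\mU,\mSigma_\mV)$, and differentiability of $\mf$ and $\mh$ makes $\mg$ in \eqref{eq:geq} differentiable. Hence \cref{thm:1stPoincare} applies with $\mX = \mW$ and gives at once
\[
\expec[\mJ_\mg(\mW)]\,\mSigma_\mW\,\expec[\mJ_\mg(\mW)]^\transpose \leq \cov(\mZ) \leq \expec\big[\mJ_\mg(\mW)\,\mSigma_\mW\,\mJ_\mg(\mW)^\transpose\big].
\]
Everything then reduces to identifying these two matrices with the right-hand sides of \eqref{eq:1stPoincareCovZ}.

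First I would compute the Jacobian of $\mg$ by the chain rule. Differentiating the two blocks $\mf(\mx_1)+\mx_2$ and $\mq(\mx_1,\mx_2)+\mx_3 = \mh[\mf(\mx_1)+\mx_2]+\mx_3$ with respect to $(\mx_1,\mx_2,\mx_3)$ and evaluating at $\mW$ (so that $\mf(\mX_p)+\mU$ is the argument appearing throughout) yields the block form
\[
\mJ_\mg(\mW) = \begin{pmatrix} \mJ_\mf(\mX_p) & \mId_{n\times n} & \mZero_{n\times m} \\ \mJ_\mh[\mf(\mX_p)+\mU]\,\mJ_\mf(\mX_p) & \mJ_\mh[\mf(\mX_p)+\mU] & \mId_{m\times m} \end{pmatrix}.
\]
The only subtlety here is the product structure of the lower-left block coming from the composition in \eqref{eq:furtherAuxFunc}; the rest of the entries are immediate.

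The decisive structural observation is that, since $\mSigma_\mW$ is block diagonal, both quadratic expressions split into a sum of three contributions, one for each column block of $\mJ_\mg(\mW)$ weighted by $\mP_p$, $\mSigma_\mU$ and $\mSigma_\mV$ respectively. For the lower bound I take expectations of the column blocks first: the $\mP_p$-weighted term built from the first column of $\expec[\mJ_\mg(\mW)]$ is exactly $\mA_1$, the $\mSigma_\mU$-weighted term from the second column is exactly $\mB_1$, and the $\mSigma_\mV$-weighted term from the third column collapses to $\mC_\mV$ in \eqref{eq:mC-def}, giving $\mA_1+\mB_1+\mC_\mV$. For the upper bound the expectation sits outside, so I keep the random Jacobians: the $\mP_p$- and $\mSigma_\mU$-weighted contributions combine, after factoring out $\begin{pmatrix}\mId_{n\times n}\\ \mJ_\mh[\mf(\mX_p)+\mU]\end{pmatrix}$ on the left and its transpose on the right, into the single expression $\mJ_\mf(\mX_p)\mP_p\mJ_\mf(\mX_p)^\transpose+\mSigma_\mU$ sandwiched between them, which is precisely $\mA_2$; the $\mSigma_\mV$-weighted term is again $\mC_\mV$. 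This establishes the upper bound $\mA_2+\mC_\mV$. There is no genuine analytic obstacle: the whole argument is the application of \cref{thm:1stPoincare} plus careful block-matrix bookkeeping, and the main point requiring attention is recognising that combining the $\mX_p$- and $\mU$-columns in the upper bound produces the factored form $\mA_2$ rather than two separate summands.
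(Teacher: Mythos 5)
Your proposal is correct and follows essentially the same route as the paper: compute $\mJ_\mg$ by the chain rule, apply \cref{thm:1stPoincare} to $\mW$ with block-diagonal covariance $\diag(\mP_p,\mSigma_\mU,\mSigma_\mV)$, and identify the resulting block quadratic forms with $\mA_1+\mB_1+\mC_\mV$ and $\mA_2+\mC_\mV$. The paper compresses the final identification into ``after some factorization,'' and your column-block splitting with the factoring of $\bigl(\begin{smallmatrix}\mId\\ \mJ_\mh\end{smallmatrix}\bigr)$ for the upper bound is exactly that omitted bookkeeping.
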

\begin{proof} The Jacobian of $\mg$ in \Cref{eq:geq} is
\begin{equation*} 
\mJ_\mg(\mx) = \begin{pmatrix} \mJ_\mf(\mx_1) & \mId_{n\times n} & \mZero_{n\times m} \\ \mJ_\mh[\mf(\mx_1) + \mx_2]\mJ_\mf(\mx_1) & \mJ_\mh[\mf(\mx_1) + \mx_2] & \mId_{m\times m} \end{pmatrix}
\end{equation*}
by the chain rule.
We can now use \Cref{thm:1stPoincare} with $\mX = \mW$. Because the prior and noise terms are assumed independent, the covariance matrix $\cov(\mW) = \diag(\mP_p,\mSigma_\mU,\mSigma_\mV)$ is block-diagonal. Using the expression for $\mJ_\mg$ above, we compute the block matrices $\expec[\mJ_{\mg}(\mW)] \cov(\mW) \, \expec[\mJ_{\mg}(\mW)]^\transpose$ and $\expec[\mJ_{\mg}(\mW)\cov(\mW) \, \mJ_{\mg}(\mW)^\transpose]$ needed in \Cref{thm:1stPoincare} and, after some factorization, arrive at~\eqref{eq:1stPoincareCovZ}.
\end{proof}

The following lemma takes care of $\was(\mZ,\pr\mZ)$.

\begin{lemma}\label{lemma:PoincareLemma} Suppose that $\mg$ in \Cref{eq:geq} is in $\mathcal{L}(\mW)$ from \cref{def:class}. Let $\mP_\mZ$ denote the covariance of $\mZ$. Then
\begin{equation*}
\begin{split}
\was({}&\mZ,\pr\mZ) \\
\leq{}& \frac{3}{\sqrt{2}} \norm[0]{\mP_\mZ^{-1}} \norm[0]{\mP_\mZ}^{1/2} \big(\norm[0]{\mH_{\mf}(\mX_p)}_{4'} + \norm[0]{\mH_{\mq}(\mX_p,\mU)}_{4'} \big) \\
&\times \Big(\sum_{i=1}^n\norm[1]{1 + \norm[0]{\grad f_i(\mX_p)}}_2^{1/2} \Big.\\
&\quad\quad+ \Big.\sum_{i=1}^m\norm[2]{1 + \norm[1]{\mJ_\mf^{\mId}(\mX_p)(\grad h_i)[\mf(\mX_p) + \mU]}}_2^{1/2}\Big) ,
\end{split}
\end{equation*}
where the matrix $\mJ_\mf^{\mId}(\mX_p)$ is defined in \Cref{eq:gradqi}.
\end{lemma}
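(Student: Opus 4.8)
The plan is to invoke the second-order Poincaré inequality of \cref{thm:2ndPoincare} directly on the representation $\mZ = \mg(\mW)$ from \eqref{eq:geq}, taking $\mX = \mW = (\mX_p,\mU,\mV)$ and $\mSigma = \mP_\mZ$. Since $\mg \in \mathcal{L}(\mW)$ by hypothesis and $\mP_\mZ$ is positive-definite, the assumptions of \cref{thm:2ndPoincare} are met, and the prefactor $\tfrac{3}{\sqrt{2}}\norm[0]{\mP_\mZ^{-1}}\norm[0]{\mP_\mZ}^{1/2}$ transfers verbatim. Everything then reduces to evaluating the two $4'$-norms $\norm[0]{\mH_\mg(\mW)}_{4'}$ and $\norm[0]{\grad \mg(\mW)}_{4'}$ in terms of $\mf$, $\mh$, and $\mq$.

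For the Hessian factor I would exploit that the two additive blocks $+\mx_2$ and $+\mx_3$ in \eqref{eq:geq} are affine and hence contribute nothing to any Hessian. Consequently, for $i \le n$ the only nonzero block of $\mH_{g_i}(\mW)$ is $\mH_{f_i}(\mX_p)$, while for the remaining $m$ components the only nonzero block is $\mH_{q_j}(\mX_p,\mU)$, each embedded into the full matrix by zero-padding. Zero-padding leaves the spectral norm unchanged, so every term $(\expec\norm[0]{\mH_{g_i}(\mW)}^4)^{1/4}$ equals the corresponding term for $f_i$ or $q_j$, and the sum defining $\norm[0]{\mH_\mg(\mW)}_{4'}$ splits cleanly as $\norm[0]{\mH_\mf(\mX_p)}_{4'} + \norm[0]{\mH_\mq(\mX_p,\mU)}_{4'}$, which is the first parenthesised factor of the statement.

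For the gradient factor I would read off $\grad g_i$ from the Jacobian $\mJ_\mg$ already computed in the proof of \cref{lemma:CovLemma}. For $i \le n$ the relevant row is $(\grad f_i(\mX_p),\me_i,\mZero)$, so $\norm[0]{\grad g_i(\mW)}^2 = 1 + \norm[0]{\grad f_i(\mX_p)}^2$. For the $j$th measurement component the chain rule applied to $\mq = \mh[\mf(\cdot)+\cdot]$ in \eqref{eq:furtherAuxFunc} collects the $\mx_1$- and $\mx_2$-derivatives into the single vector $\mJ_\mf^{\mId}(\mX_p)(\grad h_j)[\mf(\mX_p)+\mU]$ defined through \eqref{eq:gradqi}, while the affine $\mx_3$-block contributes a unit direction; hence $\norm[0]{\grad g_{n+j}(\mW)}^2 = 1 + \norm[0]{\mJ_\mf^{\mId}(\mX_p)(\grad h_j)[\mf(\mX_p)+\mU]}^2$. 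Applying the elementary bound $\sqrt{1+t^2} \le 1 + t$ for $t \ge 0$ to each $\norm[0]{\grad g_i(\mW)}$ and evaluating the indicated norms of the resulting quantities $1 + \norm[0]{\,\cdot\,}$ produces the two sums in the last factor. Multiplying the three factors together then yields the claimed bound.

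The bookkeeping for the gradient factor is where I expect the main care to be needed: one must track the constant contributions arising from the additive noise (identity) blocks, assemble the composite gradient $\grad q_j$ into the single matrix $\mJ_\mf^{\mId}$ without conflating the roles of $\mf$ and $\mh$, and manage the passage from the moment quantities delivered by \cref{thm:2ndPoincare} to the norms displayed in the statement while keeping the block structure straight. The hypothesis $\mg \in \mathcal{L}(\mW)$ from \cref{def:class} is precisely what guarantees that all these $4'$-norms are finite, which is what legitimises the application of \cref{thm:2ndPoincare} in the first place.
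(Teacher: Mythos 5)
Your proposal follows the paper's proof essentially step for step: both apply \cref{thm:2ndPoincare} directly to $\mZ = \mg(\mW)$ with $\mSigma = \mP_\mZ$, both observe that the affine blocks of $\mg$ contribute nothing to the Hessians so that $\lVert \mH_{g_i}(\mW)\rVert$ reduces to $\lVert \mH_{f_i}(\mX_p)\rVert$ for $i\le n$ and to $\lVert \mH_{q_i}(\mX_p,\mU)\rVert$ otherwise, and both obtain $\grad q_i(\mX_p,\mU) = \mJ_\mf^{\mId}(\mX_p)(\grad h_i)[\mf(\mX_p)+\mU]$ by the chain rule. The one place you deviate is the final bookkeeping step. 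The exact quantity delivered by \cref{thm:2ndPoincare} is $(\expec\lVert\grad g_i(\mW)\rVert^4)^{1/4} = \lVert 1+\lVert\grad f_i(\mX_p)\rVert^2\rVert_2^{1/2}$, and this is what the paper's proof records and what reappears in \cref{thm:mainTheorem}; your insertion of $\sqrt{1+t^2}\le 1+t$ instead produces $(\expec(1+\lVert\grad f_i(\mX_p)\rVert)^4)^{1/4} = \lVert 1+\lVert\grad f_i(\mX_p)\rVert\rVert_4$, a valid but different (and generally weaker) bound. Neither expression equals the $\lVert 1+\lVert\grad f_i(\mX_p)\rVert\rVert_2^2$ printed in the lemma display, which appears to be a typographical slip in the statement --- the inequality $\lVert W\rVert_4 \le \lVert W\rVert_2^2$ can fail even for $W\ge 1$, so no elementary bound will take you from the Poincar\'e output to that form. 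You should therefore not contort the argument to reach the printed display; the correct target is the $\lVert 1+\lVert\grad f_i(\mX_p)\rVert^2\rVert_2^{1/2}$ version, which your derivation reaches exactly before the unnecessary last inequality.
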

\begin{proof} Because the mean and covariance of $\pr\mZ$ are equal to those of $\mZ$, we can apply \cref{thm:2ndPoincare} to $\mW$ and $\mg$. This yields
\begin{equation*}
\begin{split}
\was\big(\mZ,\mZ_{\mathrm{G}}\big) \leq{}& \frac{3\sqrt{2}}{2} \norm[0]{\mP_\mZ^{-1}} \norm[0]{\mP_\mZ}^{1/2} \sum_{i=1}^{n+m} \big(\expec[\norm[0]{\mH_{g_i}(\mW)}^4]\big)^{1/4} \\
&\times \sum_{i=1}^{n+m} \big(\expec[\norm[0]{\grad g_i(\mW)}^4]\big)^{1/4}.
\end{split}
\end{equation*}
The gradients and Hessians can be expanded. For $i \leq n$, $\mH_{g_i}$ is zero except for $\mH_{f_i}$ on its upper left corner so that $\norm[0]{\mH_{g_i}(\mW)} = \norm[0]{\mH_{f_i}(\mX_p)}$. For $i > n$, the Hessian has on its upper left corner the $2n\times 2n$ Hessian $\mH_{q_i}$ of $\mq$ in~\eqref{eq:furtherAuxFunc} and hence $\norm[0]{\mH_{g_i}(\mW)} = \norm[0]{\mH_{q_i}(\mX_p,\mU)}$.
For the first $n$ gradients,
\begin{equation*}
\norm[0]{\grad g_i(\mW)}^2 = 1 + \norm[0]{\grad f_i(\mX_p)}^2
\end{equation*}
and for the rest
\begin{equation*}
\norm[0]{\grad g_i(\mW)}^2 = 1 + \norm[0]{\grad q_i(\mX_p,\mU)}^2.
\end{equation*}
By the chain rule, the second gradient is
\begin{equation} \label{eq:gradqi-1}
\begin{split}
\grad q_i(\mX_p,\mU) &= \begin{pmatrix} \mJ_\mf(\mX_p)^\transpose (\grad h_i)[\mf(\mX_p) + \mU] \\ (\grad h_i)[\mf(\mX_p) + \mU]\end{pmatrix} \\
&= \mJ_\mf^{\mId}(\mX_p) (\grad h_i)[\mf(\mX_p) + \mU],
\end{split}
\end{equation}
where
\begin{equation} \label{eq:gradqi}
  \mJ_\mf^{\mId}(\mX_p) = \begin{pmatrix} \mJ_\mf(\mX_p)^\transpose \\ \mId_{n\times n}\end{pmatrix} \in \R^{2n \times n}.
\end{equation}
Recall the $L^p$-norm notation \smash{$\norm[0]{\mX}_p = (\expec[\norm[0]{\mX}^p])^{1/p}$}.
Therefore $(\expec[X^4])^{1/4} = \lVert X^2 \rVert_2^{1/2}$ for any univariate random variable $X$.
With this in mind, the bounds and identities above yield
\begin{equation*}
\begin{split}
\was({}&\mZ,\pr\mZ) \\
\leq{}& \frac{3}{\sqrt{2}} \norm[0]{\mP_\mZ^{-1}} \norm[0]{\mP_\mZ}^{1/2}  \big(\norm[0]{\mH_{\mf}(\mX_p)}_{4'} + \norm[0]{\mH_{\mq}(\mX_p,\mU)}_{4'} \big) \\
&\times \Big(\sum_{i=1}^n\norm[1]{1 + \norm[0]{\grad f_i(\mX_p)}^2}_2^{1/2} \Big.\\
&\quad\quad+ \Big.\sum_{i=1}^m\norm[1]{1 + \norm[0]{\grad q_i(\mX_p,\mU)}^2}_2^{1/2}\Big),
\end{split}
\end{equation*}
so that inserting \eqref{eq:gradqi-1} yields the claimed upper bound.
\end{proof}

We can now combine these lemmas with~\eqref{ineq:triangle} and~\eqref{eq:WasGBound} to obtain the main result of this article.

\begin{theorem}\label{thm:mainTheorem} If $\mg$ in \Cref{eq:geq} is in $\mathcal{L}(\mW)$, then
\begin{equation*}
  \was(\mZ,\widetilde{\mZ}) \leq C_1 + C_2,
\end{equation*}
where the two constants $C_1$ and $C_2$, which give the upper bounds $\was(\mZ,\pr\mZ) \leq C_1$ and $\was(\pr\mZ,\widetilde{\mZ}) \leq C_2$, are
\begin{equation*}
  \begin{split}
  C_1 ={}& \frac{3}{\sqrt{2}} \norm[1]{\big(\mA_1 + \mB_1 + \mC_\mV\big)^{-1}} \norm[0]{\mA_2 + \mC_\mV}^{1/2} \\
 &\times \big(\norm[0]{\mH_{\mf}(\mX_p)}_{4'} + \norm[0]{\mH_{\mq}(\mX_p,\mU)}_{4'}\big) \\
 &\times \bigg(\sum_{i=1}^n\norm[1]{1 + \norm[0]{\grad f_i(\mX_p)}^2}_2^{1/2} \bigg.\\
 &\quad\quad+ \bigg.\sum_{i=1}^m\norm[2]{1 + \norm[1]{\mJ_\mf^{\mId}(\mX_p)(\grad h_i)[\mf(\mX_p) + \mU]}^2}_2^{1/2} \bigg), \\
    C_2 ={}& \Big[ \norm[0]{\expec(\mY) - \mm_{\widetilde{\mY}}}^2 + \trace \mA_2 + \trace\mSigma_\mV + \trace\mP_\mX + \trace\mP_{\widetilde{\mY}} \\
  &\quad\quad- 2\trace\big(\sqrt{\smash[b]{\mL (\mA_1 + \mB_1 + \mC_\mV)\mL}} \, \big) \Big]^{1/2} .
  \end{split}
\end{equation*}
Here $\mA_1,\mB_1,\mC_\mV$ and $\mA_2$ are defined in \cref{lemma:CovLemma}, $\mJ_\mf^\mId(\mX_p)$ is defined in \Cref{eq:gradqi}, and \smash{$\mL = \cov(\widetilde{\mZ})^{1/2}$}.
\end{theorem}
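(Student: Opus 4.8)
The plan is to follow the decomposition already set up in~\eqref{ineq:triangle}, handling the two Wasserstein terms by the two lemmas just proved and then eliminating the unknown joint covariance $\mP_\mZ$ wherever it appears by substituting the explicit Löwner bounds of \Cref{lemma:CovLemma}.

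First I would bound $\was(\mZ,\pr\mZ)$ by \Cref{lemma:PoincareLemma}. This already supplies the entire first block of the claimed bound, except that it is phrased in terms of $\norm[0]{\mP_\mZ^{-1}}$ and $\norm[0]{\mP_\mZ}^{1/2}$ rather than the matrices $\mA_1,\mB_1,\mA_2,\mC_\mV$. To remove $\mP_\mZ$ I invoke \Cref{lemma:CovLemma}, which gives $\mA_1 + \mB_1 + \mC_\mV \leq \mP_\mZ \leq \mA_2 + \mC_\mV$. Since the Löwner order is reversed under inversion, $\mP_\mZ^{-1} \leq (\mA_1 + \mB_1 + \mC_\mV)^{-1}$, and because the largest eigenvalue (hence the spectral norm of these positive-definite matrices) is monotone with respect to the Löwner order, I obtain $\norm[0]{\mP_\mZ^{-1}} \leq \norm[1]{(\mA_1 + \mB_1 + \mC_\mV)^{-1}}$ and $\norm[0]{\mP_\mZ}^{1/2} \leq \norm[0]{\mA_2 + \mC_\mV}^{1/2}$. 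Substituting these into the bound from \Cref{lemma:PoincareLemma} reproduces the first block verbatim.

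Next I would treat the Gaussian term $\was(\pr\mZ,\widetilde{\mZ})$. Both variables are Gaussian, so~\eqref{eq:WasGBound} applies with $\mSigma_\mZ = \mP_\mZ$, $\mSigma_\mX = \cov(\widetilde{\mZ})$, and $\mL = \cov(\widetilde{\mZ})^{1/2}$. The mean difference collapses to the measurement block alone, since $\expec(\mX) = \mm_\mX = \expec(\widetilde{\mX})$ forces the state components of the two means to coincide, leaving $\norm[0]{\expec(\mY) - \mm_{\widetilde{\mY}}}^2$. The trace of $\cov(\widetilde{\mZ})$ splits along its diagonal blocks into $\trace\mP_\mX + \trace\mP_{\widetilde{\mY}}$. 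For the two remaining $\mP_\mZ$-dependent terms I again use \Cref{lemma:CovLemma}: monotonicity of the trace under the Löwner order gives $\trace\mP_\mZ \leq \trace(\mA_2 + \mC_\mV) = \trace\mA_2 + \trace\mSigma_\mV$, while for the subtracted term I need a \emph{lower} bound on $\trace\big(\sqrt{\mL\mP_\mZ\mL}\,\big)$; congruence by $\mL$ preserves the Löwner order and the matrix square root is operator monotone, so $\sqrt{\mL(\mA_1+\mB_1+\mC_\mV)\mL} \leq \sqrt{\mL\mP_\mZ\mL}$ and hence $-2\trace\big(\sqrt{\mL\mP_\mZ\mL}\,\big) \leq -2\trace\big(\sqrt{\mL(\mA_1+\mB_1+\mC_\mV)\mL}\,\big)$. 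Collecting these pieces yields the second block, and feeding both blocks into~\eqref{ineq:triangle} completes the proof.

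I expect the main obstacle to be this last monotonicity argument. Because the cross-term enters with a negative sign, the direction of the inequality flips: I must bound it from below, which forces the use of the \emph{lower} Löwner bound $\mA_1 + \mB_1 + \mC_\mV$ together with operator monotonicity of $A \mapsto \sqrt{A}$ under congruence by $\mL$. This is the only place where operator monotonicity (rather than mere scalar eigenvalue monotonicity) is genuinely required, and keeping the direction of each Löwner step straight---upper bound for $\trace\mP_\mZ$ and $\norm[0]{\mP_\mZ}$, lower bound for $\norm[0]{\mP_\mZ^{-1}}$ and the cross-term---is the only real bookkeeping concern; the rest is assembly.
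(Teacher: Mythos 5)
Your proposal is correct and follows essentially the same route as the paper: the triangle inequality \eqref{ineq:triangle}, \Cref{lemma:PoincareLemma} with the Löwner bounds of \Cref{lemma:CovLemma} substituted for $\norm[0]{\mP_\mZ^{-1}}$ and $\norm[0]{\mP_\mZ}^{1/2}$, and \eqref{eq:WasGBound} with \Cref{lemma:CovLemma} for the Gaussian term. You are in fact more explicit than the paper about the one delicate step---using operator monotonicity of the matrix square root under congruence by $\mL$ to lower-bound $\trace\big(\sqrt{\smash[b]{\mL\mP_\mZ\mL}}\,\big)$---which the paper leaves implicit.
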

\begin{proof}
  The constant $C_1$ is obtained from \cref{lemma:CovLemma,lemma:PoincareLemma}.
  The constant $C_2$ is obtained by estimating $\was(\pr\mZ,\widetilde{\mZ})$ through~\eqref{eq:WasGBound} and \cref{lemma:CovLemma} and noting that $\trace \cov(\mZ) \leq \trace(\mA_2 + \mC_\mV)$,  
\begin{align*}
\expec(\mZ) - \expec(\widetilde{\mZ}) = \begin{pmatrix} \mZero_{n\times 1} \\ \expec(\mY) - \mm_{\widetilde{\mY}}\end{pmatrix}, 
\end{align*}
and
\begin{equation*}
\trace \cov(\widetilde{\mZ})  = \trace\begin{pmatrix} \mP_\mX & \mC_{\widetilde{\mX},\widetilde{\mY}} \\ \mC_{\widetilde{\mX},\widetilde{\mY}}^\transpose & \mP_{\widetilde{\mY}}\end{pmatrix} = \trace\big(\mP_\mX + \mP_{\widetilde{\mY}}\big). \qedhere
\end{equation*}
\end{proof}

Let us examine the upper bound of \Cref{thm:mainTheorem} when $\mf$ and $\mh$ are affine.
In this case the function $\mq$ is also affine, so that $C_1 = 0$ because the Hessians are zero.
We are left with
\begin{equation*} 
  \begin{split}
    \was({}&\mZ,\widetilde{\mZ}) \leq C_2 .
  \end{split}
\end{equation*}
Note that $C_1$ is positive if only either $\mf$ or $\mh$ is affine.
Since the model and measurement functions are affine, $\mX = \widetilde{\mX}$ and $\mY = \widetilde{\mY}$ are Gaussian random variables.
The bounds in \Cref{lemma:CovLemma} coincide because Jacobians are constant matrices.
Hence $\cov(\mZ) = \cov(\widetilde{\mZ}) = \mA_1 + \mB_1 + \mC_{\mV} = \mA_2 + \mC_{\mV}$ and $\trace \cov(\mZ)  = \trace \mP_\mX + \trace \mP_\mY$.
Also recall from~\eqref{eq:mC-def} that $\trace \mC_\mV = \trace \mSigma_\mV$.
These equations yield (note the square)
\begin{equation*}
  \begin{split}
    \was(\mZ,{}&\widetilde{\mZ})^{2} \\
    \leq{}& \norm[0]{\expec(\mY) - \expec(\widetilde{\mY})}^2 + \trace \mA_2 + \trace\mSigma_\mV + \trace\mP_\mX + \trace\mP_{\widetilde{\mY}} \\
    &- 2\trace\big(\sqrt{\smash[b]{\mL (\mA_1 + \mB_1 + \mC_\mV)\mL}} \, \big) \\
    ={}& \norm[0]{\expec(\mY) - \expec(\mY)}^2 + \trace \cov(\mZ) + \trace \cov(\mZ) \\
    &- 2\trace\big(\sqrt{\smash[b]{\cov(\mZ)^{1/2} \cov(\mZ) \cov(\mZ)^{1/2}}} \, \big),
  \end{split}
\end{equation*}
which is zero since $\sqrt{\mA^{1/2} \mA \mA^{1/2}} = \mA$.

\section{A numerical example} \label{sec:numerical-example}

\begin{figure}
  \centering
  \includegraphics[width=\columnwidth]{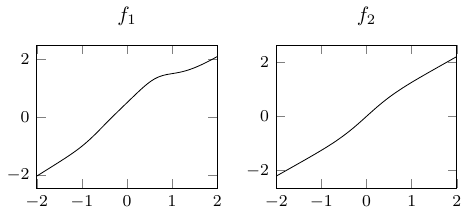}
  \caption{The non-linear functions $f_1$ and $f_2$ in~\eqref{eq:ex-funcs}.}
  \label{Fig:fs}
  \end{figure}

We consider two variants of the univariate non-stationary growth model (e.g.,~\cite{Gordon1993}) commonly used to assess and compare non-linear Kalman filters.
The models we consider are
\begin{equation} \label{eq:UNGM}
    \begin{split}
      X = f_i(X_p) + U, \quad Y = \tfrac{1}{2} X^2 + V
    \end{split}
\end{equation}
for the dynamic model functions (plotted in \Cref{Fig:fs})
\begin{equation} \label{eq:ex-funcs}
  f_1(x) = x + \frac{1+x}{2(1+x^4)} \: \text{ and } \: f_2(x) = x + \frac{x}{2(1+x^2)}.
\end{equation}
The function $f_1$ is, at least visually, more non-linear than $f_2$.
We set $U, V \sim \gauss(0, 0.05)$ and $X_p \sim \gauss(0.2, 1)$.
\Cref{Fig:Example-2} shows the distributions of $X$, $Y$, $\widetilde{X}$, and $\widetilde{Y}$.
With the expectations computed using Monte Carlo with $10^6$ samples, \Cref{thm:mainTheorem} gives the bounds $\was(\mZ, \widetilde{\mZ}) \leq 2\,267$ and $\was(\mZ, \widetilde{\mZ}) \leq 663$ for $f_1$ and $f_2$, respectively.
As expected, the bound is larger for the more non-linear function (i.e., $f_1$).
We note that the term $C_1$ of \Cref{thm:mainTheorem} dominates as $C_2 \approx \sqrt{2}$ for both functions.
The bounds are very conservative: the true distances are $\was(X, \widetilde{X}) \approx 0.21$ and $\was(Y, \widetilde{Y}) \approx 0.45$ for $f_1$, and $\was(X, \widetilde{X}) \approx 0.09$ and $\was(Y, \widetilde{Y}) \approx 0.48$ for $f_2$.
We have not found a convenient way to compute $\was(\mZ, \widetilde{\mZ})$ from samples.

\begin{figure}
\centering
\includegraphics[width=0.48\columnwidth]{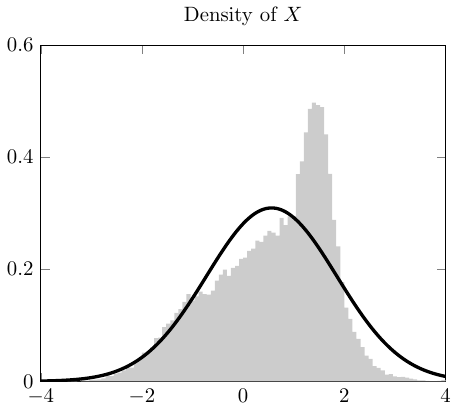}
\includegraphics[width=0.48\columnwidth]{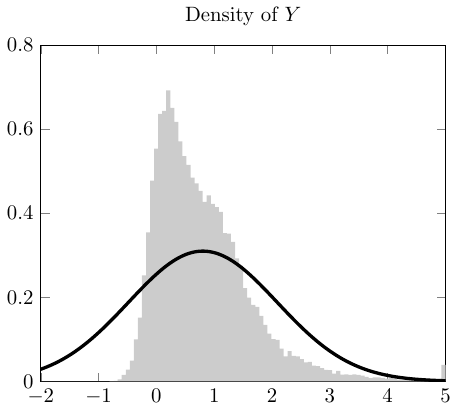}
\end{figure}

\begin{figure}
\centering
\includegraphics[width=0.48\columnwidth]{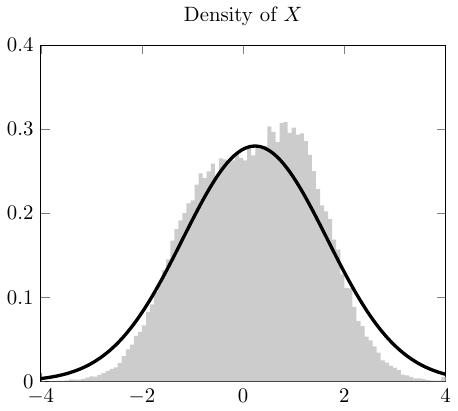}
\includegraphics[width=0.48\columnwidth]{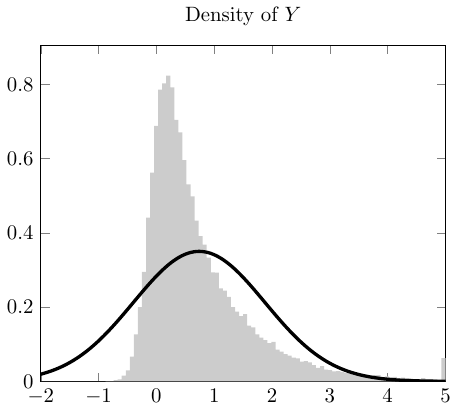}
\caption{Distributions of $X$ and $Y$ for the model in~\eqref{eq:UNGM} with $f_1$ (above) and $f_2$ (below). Densities of the Gaussian approximations \smash{$\widetilde{X}$} and \smash{$\widetilde{Y}$} are in black.}
\label{Fig:Example-2}
\end{figure}

\section{Conclusion}

\Cref{thm:mainTheorem} gives an upper bound on the Wasserstein distance between the true joint distribution $\mZ$ of the prediction $\mX$ and measurement $\mY$ and its Gaussian approximation $\widetilde{\mZ} = (\widetilde{\mX}, \widetilde{\mY})$.
The bound vanishes when the model and measurement functions are affine.
Understanding how the conditional filtering distribution $\mX \mid \mY = \my$ relates to its Gaussian approximation $\widetilde{\mX} \mid \widetilde{\mY} = \my$ is much more challenging.
The key difficulty is in quantifying how perturbing the prior affects the posterior~\cite[Lem.\@~3.6]{Kunsch2001}.
Unfortunately, such stability bounds are available only under very restrictive assumptions (e.g.,\@~\cite{LeyReinertSwan2015} and \cite{Helin2023}).
This is a pertinent problem throughout Bayesian statistics.


\end{document}